\newcommand{\Diff}{\text{Diff}}
\newcommand{\Lie}{\mathcal{L}}
\numberwithin{equation}{section}
\newtheorem{theorem}{Theorem}[section]
\newtheorem*{maintheorem*}{Main Theorem}
\newtheorem{proposition}[theorem]{Proposition}
\newtheorem{lemma}[theorem]{Lemma}
\DeclareMathOperator{\sgn}{sgn}
\begin{document}

\title{One-parameter solutions of the Euler-Arnold equation on the contactomorphism group}

\author{Stephen C. Preston}
\address{Department of Mathematics, University of Colorado, Boulder, CO 80309-0395} 
\email{Stephen.Preston@colorado.edu}

\author{Alejandro Sarria}
\address{Department of Mathematics, University of Colorado, Boulder, CO 80309-0395} 
\email{Alejandro.Sarria@colorado.edu}

\thanks{The first author is partially supported by NSF grant 1105660.}

\subjclass[2010]{35B65, 53C21, 58D05, 35Q35}

\keywords{Contactomorphism groups, Blowup and global existence, Euler-Arnold equation.}

\begin{abstract} 
We study solutions of the equation 
$$ g_t-g_{tyy} + 4g^2 - 4gg_{yy} = y gg_{yyy}-yg_yg_{yy}, \qquad y\in\mathbb{R},$$
which arises by considering solutions of the Euler-Arnold equation on a contactomorphism group when the stream function is of the form $f(t,x,y,z) = zg(t,y)$. The equation is analogous to both the Camassa-Holm equation and the Proudman-Johnson equation. We write the equation as an ODE in a Banach space to establish local existence, and we describe conditions leading to global existence and conditions leading to blowup in finite time. 
\end{abstract}

\maketitle

\section{Introduction}

In this brief note we study regularity of solutions to the Cauchy problem 
\begin{equation}
\label{main}
\begin{cases}
g_t-g_{yyt}+4g^2-4gg_{yy}=ygg_{yyy}-yg_yg_{yy},\quad\, &(t,y)\in\mathbb{R}^+\times\mathbb{R}.
\\
g(0,y)=g_0(y),\qquad &y\in\mathbb{R}.
\end{cases}
\end{equation}
We prove the following result.

\begin{maintheorem*}
Let $\phi_0(y) = g_0(y)-g_0''(y)$. Suppose $g_0$ is $C^2$ and $\phi_0$  satisfies the decay condition $\phi_0(y) = O(1/y^2)$ as $\lvert y\rvert \to \infty$. Then there is a $T>0$ such that there exists a unique solution of \eqref{main} on $[0,T)\times \mathbb{R}$ with $g(t)\in C^2$ for each $t$. If $\phi_0$ (and hence $g_0$) is nonnegative, then solutions exist globally. If $g_0$ is even and negative, then solutions blow up at some $T$ in the sense that $g(t,y)\to -\infty$ as $t\nearrow T$ for every $y\in \mathbb{R}$.
\end{maintheorem*}

Equation \eqref{main} is a special case of the Euler-Arnold equation on the contactomorphism group $\Diff_{\theta}(M)$:
\begin{equation}
\label{general}
m_t + u(m) + (n+2) \lambda m = 0,
\end{equation}
where $M$ is a Riemannian manifold of odd dimension $2n+1$ with a $1$-form $\theta$ satisfying $\theta\wedge (d\theta)^n \ne 0$. Here $f\colon M\to \mathbb{R}$ is a stream function, while $u=S_{\theta}f$ is a contact vector field (satisfying the condition that $\Lie_u\theta$ is proportional to $\theta$); the field $u$ is uniquely determined by $f$ via the condition $f=\theta(u)$. We denote by $\lambda$ the function such that $\Lie_u\theta = \lambda \theta$, and we write $\widetilde{S_{\theta}}f = (S_{\theta}f, \lambda)$. A Riemannian metric on $M$ determines a right-invariant Riemannian metric on $\Diff(M)\times C^{\infty}(M)$, which allows us to define $m=\widetilde{S_{\theta}}^*\widetilde{S_{\theta}}f$, which is called the contact Laplacian. See \cite{ebinpreston} for the derivation and local well-posedness theory of this equation when $M$ is compact, along with other examples. 

When $n=0$ (so that $M$ is $\mathbb{R}$ or $S^1$) and $\theta = dx$, we have $u=f \, \partial_x$, $\lambda = f_x$, and $m = f-f_{xx}$, and equation \eqref{general} becomes the Camassa-Holm (CH) equation \cite{camassaholm}
\begin{equation}\label{camassaholm}
f_t - f_{txx} + 3 ff_x - 2 f_x f_{xx} - f f_{xxx} = 0.
\end{equation}
Equation \eqref{general} can thus be considered a generalization of \eqref{camassaholm} to higher dimensions; it shares some of the same conservation laws and also has features in common with hydrodynamics. The fact that \eqref{camassaholm} is the Euler-Arnold equation of $\Diff(S^1)$ with right-invariant $H^1$ metric is due to Misio{\l}ek \cite{misiolekCH} and Kouranbaeva \cite{kouranbaevaCH}.

Equation \eqref{main} arises in the case where $n=1$ and $M=\mathbb{R}^3$ (viewed as the Heisenberg group) with the ``standard'' contact form $\theta = dz - y \,dx$. Here $u = -f_y \, \partial_x + (f_x + y\, f_z) \, \partial_y + (f-yf_y) \, \partial_z$ and $\lambda = f_z$. If the  Riemannian metric is $ds^2 = dx^2 + dy^2 + (dz-y\,dx)^2$, the natural left-invariant metric on the Heisenberg group which makes $M$ a Sasakian manifold (see Boyer~\cite{boyer}), we will have $ m = f-f_{yy} - (1+y^2)f_{zz} - 2y f_{xz} - f_{xx}$. The ansatz 
\begin{equation}
\label{form}
f(t,x,y,z)=zg(t,y),
\end{equation}
gives $m = z (g-g_{yy})$, $u = - zg_y \, \partial_x + y g\, \partial_y + z(g-yg_y)\,\partial_z$, and $\lambda = z g$, and equation \eqref{general} reduces to \eqref{main}. A similar ansatz in ideal hydrodynamics leads to the Proudman-Johnson equation, which has been studied in \cite{childress, proudmanjohnson, saxtontiglay, constantin3, sarria, cao}. The Proudman-Johnson equation was originally derived from the incompressible Euler equations by considering velocity fields of the form $\boldsymbol{u}(t,x,y)=(f(t,x),-yf_x(t,x))$, also known as stagnation-point similitude, which arise from a stream function $\psi(t,x,y)=yf(t,x)$ on an infinitely long 2D channel $(x,y)\in[0,L]\times\mathbb{R}$.

The outline of the paper is as follows. In \S\ref{sec:local} we establish some conservation laws and a local existence result for \eqref{main}; the results of \cite{ebinpreston} do not apply here since our $M$ is not compact, so we give an independent proof. In \S\ref{sec:global} we prove global existence of solutions to \eqref{main} for a class of initial data satisfying a particular sign condition. Finally in \S\ref{sec:blowup} we demonstrate the existence of solutions of \eqref{main} which blow up in finite time from smooth initial data.

\section{Local Existence}
\label{sec:local}

First we derive some preliminary results. Set
\begin{equation}
\label{phi}
\begin{split}
\phi(t,y)=g(t,y)-g_{yy}(t,y)
\end{split}
\end{equation}
and
\begin{equation}
\label{initial}
\begin{split}
\phi_0(y)=\phi(0,y)=g_0(y)-g_0''(y),\qquad\quad g_0(y)=g(0,y).
\end{split}
\end{equation}
Analogous to the Camassa-Holm equation, we may refer to \eqref{phi} as the momentum associated to the velocity $g$. In terms of the momentum, equation \eqref{main} may be rewritten in the form 
\begin{equation}
\label{phieq}
\begin{split}
\phi_t+yg\phi_y=\left(yg_y-4g\right)\phi.
\end{split}
\end{equation}
Note that we may determine $g$ from $\phi$ using the explicit solution formula 
\begin{equation}\label{phitog}
g(t,y) = \frac{1}{2} \int_{-\infty}^{\infty} e^{-\lvert y-y'\rvert} \phi(t,y') \, dy'.
\end{equation}

The characteristics are given by the solution of the flow equation 
\begin{equation}\label{gamma}
\frac{\partial \gamma}{\partial t}(t,y) = \gamma(t,y) g\big( t, \gamma(t,y)\big), \qquad \gamma(0,y) = y,
\end{equation}
and in terms of the flow $\gamma$ we obtain a convenient formula for the momentum conservation law. This formula should be considered analogous to the momentum transport law for Camassa-Holm and for the vorticity conservation law for the Euler equation of ideal hydrodynamics, in the sense that they all express the Noetherian conservation law arising from right-invariance of a Riemannian metric on a diffeomorphism group; see for example Arnold-Khesin~\cite{arnoldkhesin}.

\begin{proposition}\label{phiconservation}
If $\phi$ satisfies \eqref{phieq} and $\gamma$ satisfies \eqref{gamma}, then we have 
\begin{equation}\label{phisoln}
\phi\big(t, \gamma(t,y)\big) = \frac{\phi_0(y) y^5 \gamma_y(t,y)}{\gamma(t,y)^5}.
\end{equation}
\end{proposition}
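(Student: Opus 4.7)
The plan is to verify \eqref{phisoln} by showing that both sides satisfy the same logarithmic ODE along the characteristics $t\mapsto\gamma(t,y)$, with matching initial data at $t=0$.

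First I would differentiate $\phi(t,\gamma(t,y))$ in $t$ using the chain rule and the flow equation \eqref{gamma}:
\begin{equation*}
\frac{d}{dt}\phi\bigl(t,\gamma(t,y)\bigr) = \phi_t\bigl(t,\gamma\bigr) + \gamma\, g(t,\gamma)\, \phi_y\bigl(t,\gamma\bigr).
\end{equation*}
Then I would substitute the transport equation \eqref{phieq} evaluated at $(t,\gamma(t,y))$, which gives
\begin{equation*}
\frac{d}{dt}\phi\bigl(t,\gamma(t,y)\bigr) = \bigl(\gamma\, g_y(t,\gamma) - 4g(t,\gamma)\bigr)\phi\bigl(t,\gamma(t,y)\bigr).
\end{equation*}

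Next I would compute $\frac{d}{dt}\log\!\bigl(\gamma_y/\gamma^5\bigr)$. From \eqref{gamma} we have $\partial_t\log\gamma = g(t,\gamma)$, and differentiating \eqref{gamma} in $y$ gives $\gamma_{ty} = \gamma_y\bigl(g(t,\gamma) + \gamma\, g_y(t,\gamma)\bigr)$, so $\partial_t\log\gamma_y = g(t,\gamma) + \gamma\, g_y(t,\gamma)$. Combining,
\begin{equation*}
\frac{d}{dt}\log\frac{\gamma_y(t,y)}{\gamma(t,y)^5} = \bigl(g(t,\gamma) + \gamma\, g_y(t,\gamma)\bigr) - 5g(t,\gamma) = \gamma\, g_y(t,\gamma) - 4g(t,\gamma),
\end{equation*}
which is precisely the logarithmic derivative of $\phi(t,\gamma(t,y))$ obtained above.

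Therefore the quantity $\phi(t,\gamma(t,y))\cdot\gamma(t,y)^5/\gamma_y(t,y)$ is conserved in $t$ along each characteristic. Evaluating at $t=0$, where $\gamma(0,y)=y$ and $\gamma_y(0,y)=1$, gives the constant value $\phi_0(y)\, y^5$, which yields \eqref{phisoln}. The argument is entirely mechanical; the only mild subtlety is recognizing the algebraic coincidence that the exponent $5$ on $\gamma$ is exactly what is needed to cancel the single $g$ from $\partial_t\log\gamma_y$ against the $-4g$ coefficient coming from the reaction term in \eqref{phieq}. There is no serious analytic obstacle, provided $g$ is $C^2$ so that the flow $\gamma$ is $C^1$ in $y$ and the chain-rule manipulations are justified on the interval of existence.
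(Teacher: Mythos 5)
Your argument is correct and is essentially identical to the paper's proof: both compute $\partial_t\log\phi(t,\gamma(t,y)) = \gamma g_y - 4g$ from \eqref{phieq} and the chain rule, then identify this with $\partial_t\log\bigl(\gamma_y/\gamma^5\bigr)$ using the $y$-derivative of the flow equation \eqref{gamma}, and integrate from the initial data $\gamma(0,y)=y$, $\gamma_y(0,y)=1$. No meaningful difference in approach.
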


\begin{proof}
Using the chain rule, we have 
\begin{equation}\label{lagrangianphi}
\begin{split} 
\frac{\partial}{\partial t} \log \phi\big(t, \gamma(t,y)\big) &= \frac{\phi_t\big(t, \gamma(t,y)\big) + \gamma(t,y) g\big(t, \gamma(t,y)\big) \phi_y\big(t, \gamma(t,y)\big)}{\phi\big(t, \gamma(t,y)\big)} \\
&= \gamma(t,y) g_y(t, \gamma(t,y)\big) - 4 g\big(t,\gamma(t,y)\big),
\end{split}
\end{equation}
using \eqref{phieq}.
Differentiating \eqref{gamma} with respect to $y$ we have 
\begin{equation}\label{gammaderivative}
\gamma_{ty}(t,y) = \gamma_y(t,y) g\big(t, \gamma(t,y)\big) + \gamma(t,y) \gamma_y(t,y) g_y\big(t, \gamma(t,y)\big),
\end{equation}
which we can use to eliminate both $g$ and $g_y$ in \eqref{lagrangianphi}. We obtain
$$ \frac{\partial}{\partial t} \log \phi\big(t, \gamma(t,y)\big) = \frac{\gamma_{ty}(t,y)}{\gamma_y(t,y)} - 5 \, \frac{\gamma_t(t,y)}{\gamma(t,y)},$$
which can be easily integrated to obtain \eqref{phisoln}. 
\end{proof}

A simple consequence of Proposition \ref{phiconservation} is the conservation of the sign of the momentum, which is important for our global existence results.

\begin{lemma}\label{signconservation}
Suppose \eqref{main} has a solution on $[0,T)$ for some $T>0$. Then the flow $\gamma(t,y)$ is a strictly increasing diffeomorphism of $\mathbb{R}$ with $\gamma(t,0)=0$ for all $t\in [0,T)$. Furthermore if $\phi_0(y)\ge 0$ for all $y\in \mathbb{R}$, then $\phi(t,y)\ge 0$ and $g(t,y)\ge 0$ for all $t\in [0,T)$ and $y\in \mathbb{R}$. Similarly if $\phi_0(y)\le 0$ for all $y\in \mathbb{R}$, then $\phi(t,y)$ and $g(t,y)$ are nonpositive. 
\end{lemma}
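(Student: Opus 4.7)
The argument breaks naturally into three parts, each invoking material already established in the excerpt. First I would settle the flow properties. Since the characteristic ODE $\partial_t \gamma = \gamma\, g(t,\gamma)$ has locally Lipschitz right-hand side (as $g \in C^2$) and admits the trivial solution $\gamma \equiv 0$ when $y = 0$, uniqueness forces $\gamma(t,0) = 0$ for every $t \in [0,T)$. To show $\gamma_y > 0$, I would rewrite \eqref{gammaderivative} as the linear ODE $(\log \gamma_y)_t = g(t,\gamma) + \gamma\, g_y(t,\gamma)$, yielding
$$\gamma_y(t,y) = \exp\int_0^t \bigl[ g(s,\gamma(s,y)) + \gamma(s,y)\, g_y(s,\gamma(s,y))\bigr]\, ds > 0.$$
Hence $\gamma(t,\cdot)$ is strictly increasing, and combined with $\gamma(t,0) = 0$, it preserves the sign of $y$. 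To promote this to a diffeomorphism of all of $\mathbb{R}$, I would use the bound $\|g(t,\cdot)\|_\infty \leq \tfrac{1}{2}\|\phi(t,\cdot)\|_{L^1}$ extracted from \eqref{phitog} together with Gronwall applied to $\partial_t|\gamma| \leq \|g\|_\infty |\gamma|$ to rule out finite-time escape; running the reverse flow from any target point then produces a preimage, giving surjectivity.

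Second, for the sign of $\phi$, I would apply Proposition \ref{phiconservation} directly. The coefficient $y^5\,\gamma_y(t,y)/\gamma(t,y)^5$ appearing in \eqref{phisoln} is strictly positive, since $y$ and $\gamma(t,y)$ share a sign (by the first part) and $\gamma_y > 0$. Therefore $\phi(t, \gamma(t,y))$ has the same sign as $\phi_0(y)$ for every $y$, and because $\gamma(t,\cdot)$ is a bijection of $\mathbb{R}$, a pointwise sign condition on $\phi_0$ transfers to the same pointwise sign condition on $\phi(t,\cdot)$.

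Third, the sign of $g$ is immediate from the representation \eqref{phitog}: the kernel $e^{-|y-y'|}$ is strictly positive, so $g(t,\cdot)$ inherits the sign of $\phi(t,\cdot)$, completing the lemma.

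The only step I expect to require any real care is the diffeomorphism claim, since $\gamma_y > 0$ combined with $\gamma(t,0) = 0$ only yields a bijection onto the range of $\gamma(t,\cdot)$. Surjectivity onto all of $\mathbb{R}$ hinges on the uniform boundedness of $g$ on time slices, which in turn relies on the $L^1$ control of $\phi$ coming from the decay hypothesis $\phi_0(y) = O(1/y^2)$ propagated through \eqref{phisoln}. Everything else is a direct appeal to Proposition \ref{phiconservation} and the positivity of the Helmholtz kernel.
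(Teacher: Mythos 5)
Your proof is correct and follows essentially the same route as the paper: the exponential formula for $\gamma_y$ from \eqref{gammaderivative}, the fixed point $\gamma(t,0)=0$ giving sign preservation of $y$, the positivity of the factor in \eqref{phisoln}, and the positive kernel in \eqref{phitog}. Your additional care about surjectivity of $\gamma(t,\cdot)$ onto $\mathbb{R}$ is a point the paper's own proof leaves implicit, and is a welcome refinement rather than a deviation.
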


\begin{proof}
From equation \eqref{gammaderivative} we see that 
\begin{equation}\label{gammaderivativesoln}
\gamma_y(t,y) = \exp{\left( \int_0^t g\big(\tau, \gamma(\tau, y)\big) + \gamma(\tau, y) g_y\big(\tau, \gamma(\tau,y)\big) \, d\tau\right)},
\end{equation}
so that $\gamma_y(t,y)>0$ for all $t$ and $y$. Since $\gamma(0,0)=0$ we obviously have $\gamma(t,0)=0$ for all time, and we conclude that $\gamma(t,y)>0$ 
if $y>0$ and $\gamma(t,y)<0$ 
if $y<0$. Formula \eqref{phisoln} then implies that $\phi\big(t, \gamma(t,y)\big)$ has the same sign as $\phi_0(y)$ for every $y \in \mathbb{R}$. If $\phi(t,y)\ge 0$ for all $y\in\mathbb{R}$, the explicit solution formula \eqref{phitog} shows that $g(t,y)\ge 0$ as well.
\end{proof}

Another consequence of Proposition \ref{phiconservation} is the local existence theorem, which we establish by writing everything in terms of $\gamma$ as a ``particle trajectory equation'' and using Picard iteration, as in Chapter 4 of Majda-Bertozzi \cite{majdabertozzi}.

\begin{theorem}\label{localexistence}
Suppose $g_0\colon \mathbb{R}\to\mathbb{R}$ is a $C^2$ function such that $\phi_0 = g_0 - g_0''$ satisfies 
\begin{equation}\label{decaycondition}
\sup_{y\in \mathbb{R}} y^2 \lvert \phi_0(y)\rvert \le M \quad \text{for some $M$}.
\end{equation} 
Then there is a unique solution $g$ of equation \eqref{main} defined on $[0,T)\times \mathbb{R}$ for some $T>0$ such that $g(t,y)$ is $C^2$ in $y$ for each $t\in [0,T)$. 
\end{theorem}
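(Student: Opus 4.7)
My plan is to follow the Majda-Bertozzi particle-trajectory approach: recast the problem as a single autonomous ODE for the Lagrangian flow $\gamma(t,y)$ on a Banach space of diffeomorphisms of $\mathbb{R}$, verify the right-hand side is locally Lipschitz, and then apply the Picard-Lindel\"of theorem.

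The first step is to close up the flow equation \eqref{gamma}. Using the momentum formula \eqref{phisoln}, the reconstruction formula \eqref{phitog}, and the change of variables $y'=\gamma(t,z)$ in the convolution, I would obtain
\[
\gamma_t(t,y) \;=\; \frac{\gamma(t,y)}{2}\int_{-\infty}^{\infty} e^{-\lvert\gamma(t,y)-\gamma(t,z)\rvert}\,\frac{\phi_0(z)\, z^{5}\, \gamma_y(t,z)^{2}}{\gamma(t,z)^{5}}\,dz \;=:\; F(\gamma)(y),
\]
an ODE $\dot{\gamma}=F(\gamma)$ with $\gamma(0,y)=y$. Writing $\gamma=\mathrm{id}+\eta$, I would take $X$ to be the Banach space of $C^1$ functions $\eta\colon\mathbb{R}\to\mathbb{R}$ with $\eta(0)=0$ and norm $\|\eta\|_X=\sup_y|\eta(y)|+\sup_y|\eta_y(y)|$, and work on the open set $U\subset X$ where $\inf_y\gamma_y\ge c>0$. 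On $U$, the mean value theorem together with $\eta(0)=0$ gives the bilateral comparability $c|y|\le|\gamma(y)|\le(1+\|\eta\|_X)|y|$, so that $z^{5}/\gamma(t,z)^{5}$ is uniformly bounded in $z$; combined with $e^{-|\gamma(y)-\gamma(z)|}\le e^{-c|y-z|}$ and the bound $|\phi_0(z)|\le M/(1+z^{2})$ (which follows from \eqref{decaycondition} together with continuity of $\phi_0$), the integrand is absolutely integrable in $z$ uniformly in $y$ and $\gamma\in U$, so $F\colon U\to X$ is well defined.

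The heart of the argument is the local Lipschitz estimate for $F$. For $\gamma_i=\mathrm{id}+\eta_i$ in a bounded subset of $U$, I would decompose $F(\gamma_1)(y)-F(\gamma_2)(y)$ into contributions from the four differences
\[
\gamma_1(y)-\gamma_2(y),\;\; e^{-|\gamma_1(y)-\gamma_1(z)|}-e^{-|\gamma_2(y)-\gamma_2(z)|},\;\; \gamma_{1,y}(z)^{2}-\gamma_{2,y}(z)^{2},\;\; \gamma_1(z)^{-5}-\gamma_2(z)^{-5},
\]
each of which is pointwise $\lesssim \|\eta_1-\eta_2\|_X$ times a factor that combines with $|\phi_0(z)|\,z^{5}\,e^{-c|y-z|}$ to form an $L^{1}(dz)$ kernel. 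Differentiating in $y$ inside the integral produces only a bounded sign factor from $|\gamma(y)-\gamma(z)|$ and does not change the structure of the estimate, so the bound transfers to the full $X$-norm. With local Lipschitz continuity in hand, Picard-Lindel\"of in $X$ yields a unique $\gamma\in C^{1}([0,T);X)\cap U$ starting from $\eta\equiv 0$.

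To recover the solution, I would define $\phi$ on the image of $\gamma$ by \eqref{phisoln} and pull back by $\gamma^{-1}$; since $\gamma_y$ and $z^{5}/\gamma(t,z)^{5}$ are continuous and bounded, $\phi(t,\cdot)$ is continuous with the same $O(1/y^{2})$ decay as $\phi_0$, so convolution with $\tfrac{1}{2}e^{-|\cdot|}$ via \eqref{phitog} produces $g(t,\cdot)\in C^{2}$, and reversing the derivation confirms $g$ solves \eqref{main}. The principal obstacle is the bookkeeping in the Lipschitz step: one must show that the apparently singular factor $1/\gamma(t,z)^{5}$ is tamed by $z^{5}$ through the uniform comparability $\gamma\asymp z$, and that $|\phi_0(z)|\,z^{5}$ is then absorbed into an $L^{1}$ kernel by the exponential. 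This is precisely where the decay hypothesis \eqref{decaycondition} is used in a sharp way, and it is the reason the theorem requires an assumption on $\phi_0$ rather than on $g_0$ alone.
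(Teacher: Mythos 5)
Your proposal is correct and follows essentially the same route as the paper: the same reduction to the autonomous ODE $\dot\gamma = F(\gamma)$ via \eqref{phisoln} and the change of variables in \eqref{phitog}, the same Banach space of flows pinned at the origin with derivative bounds $a\le\gamma'\le b$, and the same use of the decay hypothesis to tame $[z/\gamma(z)]^5$ and produce an integrable kernel before invoking Picard. The only cosmetic differences are that you estimate the Lipschitz constant by directly decomposing $F(\gamma_1)-F(\gamma_2)$, whereas the paper bounds the derivative $DF_\gamma$ and applies the mean value theorem, and your norm additionally includes $\sup|\eta|$.
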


\begin{proof}
By formula \eqref{phitog} we have 
\begin{equation}\label{convolutioncomposition}
g\big(t, \gamma(t,y)\big) = \tfrac{1}{2} \int_{-\infty}^{\infty} e^{-\lvert \gamma(t,y)-y'\rvert} \phi(t,y') \, dy' = \tfrac{1}{2} \int_{-\infty}^{\infty} e^{-\lvert \gamma(t,y)-\gamma(t,z)\rvert} \phi\big(t, \gamma(t,z)\big) \gamma_z(t,z) \, dz.
\end{equation}
Using formula \eqref{phisoln} and plugging this into \eqref{gamma}, we obtain the differential equation 
\begin{equation}
\frac{\partial \gamma}{\partial t}(t,y) = \tfrac{1}{2} \gamma(t,y) \int_{-\infty}^{\infty} e^{-\lvert \gamma(t,y)-\gamma(t,z)\rvert} \phi_0(z) \left[ \frac{z}{\gamma(t,z)}\right]^5 \gamma_z(t,z)^2 \, dz.
\end{equation}

We now view this as the equation 
\begin{equation}\label{ODE}
\frac{d\gamma}{dt} = F\big(\gamma(t)\big), \qquad \gamma(0) = y\mapsto y
\end{equation} on a certain open subset of a Banach space, where the function $F$ is given by 
\begin{equation}\label{Fdef}
F(\gamma) = y\mapsto \tfrac{1}{2} \gamma(y) \int_{-\infty}^{\infty} e^{-\lvert \gamma(y)-\gamma(z)\rvert} \phi_0(z) \left[ \frac{z}{\gamma(z)}\right]^5 \gamma'(z)^2 \, dz.
\end{equation}
Define a Banach space $B$ by 
$$B = \Big\{\gamma\colon \mathbb{R}\to \mathbb{R} \, \Big\vert \, \gamma(0)=0 \text{ and } \sup_{y\in\mathbb{R}} \,\lvert \gamma'(y)\rvert < \infty \Big\},$$
with norm $\lVert \gamma\rVert = \sup_{y\in \mathbb{R}} \lvert \gamma'(y)\rvert$. 
For numbers $a$ and $b$ satisfying $0<a<1<b$, let $U$ denote the open subset $U = \{ \gamma \in B \, \big\vert \, a<\gamma'(y)<b\,\, \forall\, y\in \mathbb{R}\}$.
Clearly $U$ contains the identity, which is the initial condition for \eqref{ODE}. Our goal is to show that $F$ is Lipschitz on $U$, and we do this by showing that $F$ has a uniformly bounded derivative on $U$. 

If $v\in B$, we easily compute that 
\begin{equation}\label{Fderivative}
[DF_{\gamma}(v)](y) = \tfrac{1}{2} \int_{-\infty}^{\infty} e^{-\lvert \gamma(y)-\gamma(z)\rvert} \zeta(z) \Big[ v(y) - 5 v(z)/\gamma(z) + 2 v'(z)/\gamma'(z)
+ \gamma(y) \sgn{(y-z)} [v(z) -v(y)]\Big] \, dz.\\
\end{equation}
where 
\begin{equation}\label{zeta}
\zeta(z) = \phi_0(z) 
[z/\gamma(z)]^5
\gamma'(z)^2.
\end{equation}
Writing $w(y) = [DF_{\gamma}(v)](y)$, we just need to show that $\lvert w'(y)\rvert$ is bounded. 
The computation of $w'$ is tedious but straightforward. Using $\lvert v'(y)\rvert \le c$ and $a\le \gamma'(y)\le b$, we obtain the estimate
$$ \lvert w'(y)\rvert \le 
\frac{b^2c}{2 a^6} \int_{-\infty}^{\infty} e^{-a\lvert y-z\rvert} \phi_0(z)\big[ a(1+3b\lvert y\rvert +b^2y^2) + (1+b\lvert y\rvert )(ab\lvert z\rvert +5b+2a)\big] \, dz.$$
Now by assumption we have $\lvert \phi_0(z)\rvert \le M'/(1+\lvert z\rvert)^2$ for some constant $M'$, and so we have 
\begin{equation}\label{wderivativeestimate}
\lvert w'(y)\rvert \le C_1 (1+\lvert y\rvert)^2 \int_{-\infty}^{\infty} \frac{e^{-a\lvert y-z\rvert}\,dz}{(1+\lvert z\rvert)^2} + C_2 (1+\lvert y\rvert) \int_{-\infty}^{\infty} \frac{e^{-a\lvert y-z\rvert}\,dz}{1+\lvert z\rvert}.
\end{equation}

The right side of \eqref{wderivativeestimate} is bounded, for we can break it up into terms that have finite limits as $\lvert y\rvert\to\infty$, as follows:
$$ \lim_{\lvert y\rvert \to\infty} \frac{\int_{-\infty}^y e^{az} \phi(z)\,dz}{e^{ay}\phi(y)} = \lim_{\lvert y\rvert \to \infty} \frac{e^{ay} \phi(y)}{e^{ay}(\phi'(y)+a\phi(y)} = \frac{1}{a}$$
by L'Hopital's rule since $\phi'(y)/\phi(y)\to 0$ for $\phi(y) = (1+\lvert y\rvert)^{-k}$ when $k=1$ or $k=2$. 
Hence $DF_{\gamma}$ is a bounded linear operator in $B$ whenever $\gamma\in U$, and thus $F$ is Lipschitz on $U$ by the Mean Value Theorem. By Picard's Theorem~\cite{hartman}, there is a unique solution for possibly short time with $\gamma(0)$ the identity.
\end{proof}

Next we establish 
the non-existence of breaking wave solutions to \eqref{main} for initial data $g_0$ such that $\phi_0\nequiv0$ does not change sign. Recall that the wave breaking phenomenon for a nonlinear wave equation is the existence of a blowup time $T$ such that $\lvert g_y(t,y_*)\rvert \to \infty$ as $t\to T$ for some $y_*$ while $\lvert g(t,y_*)\rvert$ remains bounded.

\begin{lemma}
\label{lemma2}
Let $T>0$ denote the maximal life-span of $g$. If $g_0(y)$ is such that $\phi_0(y)=g_0(y)-g_0''(y)$ does not change sign, then $\lvert g_y(t,y)\rvert \le \lvert g(t,y)\rvert$ for all $t \in [0,T)$ and $y\in \mathbb{R}$. Hence no singularity in the form of wave breaking can occur on $(0,T]$. 
\end{lemma}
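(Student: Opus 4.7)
The plan is to exploit the convolution representation \eqref{phitog} together with the sign-conservation result from Lemma \ref{signconservation}. Splitting the convolution kernel at $y' = y$, define
\begin{equation*}
A(t,y) = \tfrac{1}{2}\int_{-\infty}^{y} e^{-(y-y')}\phi(t,y')\,dy', \qquad B(t,y) = \tfrac{1}{2}\int_{y}^{\infty} e^{-(y'-y)}\phi(t,y')\,dy'.
\end{equation*}
By \eqref{phitog} we have $g(t,y) = A(t,y) + B(t,y)$. Differentiating under the integral (the boundary contributions cancel) gives
\begin{equation*}
g_y(t,y) = -\tfrac{1}{2}\int_{-\infty}^{\infty} \sgn(y-y')\,e^{-|y-y'|}\phi(t,y')\,dy' = B(t,y) - A(t,y).
\end{equation*}

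Now I invoke Lemma \ref{signconservation}: since $\phi_0$ has a fixed sign, so does $\phi(t,\cdot)$ for every $t \in [0,T)$. Consequently $A(t,y)$ and $B(t,y)$ have the same sign (both nonnegative if $\phi_0 \ge 0$, both nonpositive if $\phi_0 \le 0$). Applying the elementary inequality $|B-A| \le |A| + |B|$ for numbers of the same sign, this last quantity equals $|A+B|$, and we obtain
\begin{equation*}
|g_y(t,y)| = |B(t,y) - A(t,y)| \le |A(t,y)| + |B(t,y)| = |A(t,y) + B(t,y)| = |g(t,y)|,
\end{equation*}
which is the desired pointwise bound.

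For the wave-breaking conclusion, recall that wave breaking by definition requires the existence of a point $y_* \in \mathbb{R}$ and a time $T$ such that $|g_y(t,y_*)| \to \infty$ while $|g(t,y_*)|$ stays bounded as $t \nearrow T$. The inequality just proved rules this out: any blowup of $|g_y|$ at a point is accompanied by blowup of $|g|$ at the same point. I expect no real obstacle here beyond justifying the differentiation under the integral sign, which is routine given the exponential decay of the kernel and the hypothesis $\phi_0(y) = O(1/y^2)$ (preserved along the flow under the sign assumption by Proposition \ref{phiconservation}).
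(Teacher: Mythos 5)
Your proof is correct and follows essentially the same route as the paper: both split the kernel of \eqref{phitog} at $y'=y$ and invoke Lemma \ref{signconservation} to give the two pieces $A$ and $B$ a common sign; the paper phrases the conclusion as the identity $g^2-g_y^2=4AB\ge 0$ while you use $\lvert B-A\rvert\le\lvert A\rvert+\lvert B\rvert=\lvert A+B\rvert$, which is the same estimate in different clothing.
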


\begin{proof}
Writing formula \eqref{phitog} in the form
$$ g(t,y) = \tfrac{1}{2} e^{-y} \int_{-\infty}^y e^{y'} \phi(t,y') \, dy' + \tfrac{1}{2} e^y \int_y^{\infty} e^{-y'} \phi(t,y') \, dy',$$
it is easy to check that 
\begin{equation}\label{ggprime}
g(t,y)^2 - g_y(t,y)^2 = \left(\int_{-\infty}^y e^{y'} \phi(t,y') \, dy'\right) \left( \int_y^{\infty} e^{-y'} \phi(t,y') \,dy'\right).
\end{equation}

As a result, if $\phi_0$ never changes sign, then by Lemma \ref{signconservation} we know that $\phi$ is either nonnegative or nonpositive for all $(t,y)\in [0,T)\times\mathbb{R}$. Hence the right side of \eqref{ggprime} is nonnegative and we have the bound $\lvert g_y(t,y)\rvert \le \lvert g(t,y)\rvert$ for all $y$. 
%
%
\end{proof}

For the Camassa-Holm equation \eqref{camassaholm}, it is known \cite{constantin2, mckean} that if the initial momentum does not change sign, then the solution of the equation is global in time. For equation \eqref{main}, we will see that even when the sign of the momentum is assumed constant, the behavior may be very different depending on whether it is positive or negative.

\section{Global Existence}
\label{sec:global}

In this section we study global existence of certain solutions to \eqref{main}. 
Theorem \ref{theorem1} below establishes global existence in time of solutions to \eqref{main} arising from initial data $g_0$ such that $\phi_0$ is nonnegative.

\begin{proposition}
\label{prop1}
Suppose $g$ is a solution of \eqref{main} with initial condition $\phi_0$ satisfying 
the decay condition \eqref{decaycondition}.
Then 
\begin{equation}
\label{4}
\begin{split}
\int_{\mathbb{R}}{\phi(t,y)\,dy}
\leq
\int_{\mathbb{R}}{\phi_0(y)\,dy},\qquad\quad t\in[0,T).
\end{split}
\end{equation}
\end{proposition}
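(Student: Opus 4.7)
The plan is to show that $t \mapsto \int_{\mathbb{R}} \phi(t,y)\,dy$ is nonincreasing, so \eqref{4} follows by integrating in time. I would start by differentiating under the integral sign and substituting the transport form \eqref{phieq}:
$$\frac{d}{dt}\int_{\mathbb{R}} \phi\,dy = \int_{\mathbb{R}}\bigl[-yg\phi_y + (yg_y-4g)\phi\bigr]\,dy.$$
An integration by parts converts $-\int yg\phi_y\,dy = \int(g+yg_y)\phi\,dy$ (the boundary contribution vanishing for reasons discussed below), which combines with the remaining terms to give
$$\frac{d}{dt}\int_{\mathbb{R}} \phi\,dy = \int_{\mathbb{R}} (2yg_y - 3g)\,\phi\,dy.$$

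Next, I would substitute $\phi = g - g_{yy}$ and integrate by parts again, exploiting $2gg_y = (g^2)_y$ and $2g_y g_{yy} = (g_y^2)_y$. From $\int 2yg_y\phi\,dy$ I get $\int y(g^2)_y\,dy - \int y(g_y^2)_y\,dy = -\int g^2\,dy + \int g_y^2\,dy$, while $-3\int g\phi\,dy$ becomes $-3\int g^2\,dy - 3\int g_y^2\,dy$ after using $\int gg_{yy}\,dy = -\int g_y^2\,dy$. Summing gives
$$\frac{d}{dt}\int_{\mathbb{R}} \phi\,dy = -4\int_{\mathbb{R}} g(t,y)^2\,dy - 2\int_{\mathbb{R}} g_y(t,y)^2\,dy \le 0,$$
and integrating in $t$ yields \eqref{4}. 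Note that this bound requires no sign hypothesis on $\phi_0$.

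The main obstacle is justifying the several integrations by parts. The decay assumption \eqref{decaycondition} combined with \eqref{phitog} and the fact that convolution with $\tfrac{1}{2}e^{-|\cdot|}$ preserves quadratic decay shows that $g(t,y)$ and $g_y(t,y)$ are both $O(1/|y|^2)$ for each $t$; the conservation law \eqref{phisoln} together with the control on $\gamma$ from \eqref{gammaderivativesoln} on the existence interval $[0,T)$ gives a matching $O(1/|y|^2)$ bound for $\phi(t,\cdot)$. Consequently the boundary terms $yg\phi$, $yg^2$, $yg_y^2$, and $gg_y$ all decay at least like $1/|y|^3$ and vanish at $\pm\infty$, legitimizing every step.
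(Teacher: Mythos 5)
Your proof is correct and takes essentially the same route as the paper: both differentiate $\int_{\mathbb{R}}\phi\,dy$ in time, integrate by parts using the decay of $\phi$, $g$, and $g_y$ guaranteed by \eqref{decaycondition} and \eqref{phitog}, and arrive at the same identity $\frac{d}{dt}\int_{\mathbb{R}}\phi\,dy=-4\int_{\mathbb{R}}g^2\,dy-2\int_{\mathbb{R}}g_y^2\,dy\le 0$, which is \eqref{42}. The only cosmetic difference is that you start from the transport form \eqref{phieq} and substitute $\phi=g-g_{yy}$ afterwards, whereas the paper works from the equivalent rearrangement $\phi_t+4g\phi=y\partial_y\left(gg_{yy}-g_y^2\right)$.
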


\begin{proof}
We first observe that by equation \eqref{phisoln}, if $\phi_0$ satisfies the decay condition then so does $\phi(t,y)$ for any $y$. 
Using \eqref{phi} we may write \eqref{main} as
\begin{equation}
\label{1.1}
\begin{split}
\phi_t+4g\phi=y\partial_y\left(gg_{yy}-g_y^2\right).
\end{split}
\end{equation}
Integrating the right side of \eqref{1.1} using the decay condition, we obtain 
\begin{equation}
\label{3}
\begin{split}
\int_{\mathbb{R}}{y\partial_y\left(gg_{yy}-g_y^2\right)\,dy}=2\int_{\mathbb{R}}{g_y^2\,dy},
\end{split}
\end{equation}
which then yields
\begin{equation}
\label{42}
\begin{split}
\frac{d}{dt}\int_{\mathbb{R}}{g\,dy}=-2\left(2\int_{\mathbb{R}}{g^2\,dy}+\int_{\mathbb{R}}{g_y^2\,dy}\right)<0.
\end{split}
\end{equation}
The result follows using the fact that $\int_{\mathbb{R}} \phi \, dy = \int_{\mathbb{R}} g\,dy$. 
\end{proof}

Amongst other conserved quantities, the integral of the momentum $\phi$ associated to the Camassa-Holm equation is known to be conserved \cite{constantin2}. The same is true in general for equation \eqref{general} on a compact manifold, but not in this case since the equation really ``lives'' on $\mathbb{R}^3$. Essentially what is happening is that the solutions of the form \eqref{form} are not well-behaved since for example they have infinite energy; the same type of phenomenon appears when solving the Euler equation of ideal hydrodynamics: in two dimensions finite-energy solutions exist globally \cite{bardos}, but there are infinite-energy solutions of the form \eqref{form} which can blow up in finite time; see for instance \cite{childress, sarria} and references therein.


\begin{theorem}
\label{theorem1}
Suppose $\phi_0$ satisfies the decay condition \eqref{decaycondition} 
and $\phi_0(y)\geq 0$ for all $y\in\mathbb{R}$. Then the solution $g$ of \eqref{main} with initial data $g_0$ exists globally in time. 
\end{theorem}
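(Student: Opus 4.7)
The plan is to argue by contradiction via a continuation argument. Let $[0, T^*)$ be the maximal interval of existence from Theorem~\ref{localexistence}; it suffices to show that on any $[0, T] \subset [0, T^*)$ we have uniform control of $\|\gamma_y(t)\|_\infty$ and $\|1/\gamma_y(t)\|_\infty$ together with preservation of the decay \eqref{decaycondition}, as these are the hypotheses needed to restart the ODE of Theorem~\ref{localexistence} past any $T < T^*$.

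Three a priori bounds follow immediately from $\phi_0 \ge 0$. By Lemma~\ref{signconservation}, $\phi, g \ge 0$ throughout $[0, T^*)$. Proposition~\ref{prop1} gives $\int_{\mathbb{R}} g(t, y)\,dy \le I_0 := \int g_0\,dy$. Lemma~\ref{lemma2} gives $|g_y| \le g$, equivalently $|(\log g)_y| \le 1$, so $g(t, y) \ge g(t, y_0)\, e^{-|y - y_0|}$; integrating in $y$ yields $\|g(t)\|_\infty \le I_0/2 =: K$ uniformly in $t$, and hence $\|g_y(t)\|_\infty \le K$ as well. Finally, $\gamma_t/\gamma = g(\gamma) \in [0, K]$ gives $|y| \le |\gamma(t, y)| \le |y| e^{Kt}$, so in particular $|y/\gamma(t,y)| \le 1$.

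For the bound on $\gamma_y$, the approach is to bootstrap on $\Phi(t) := \sup_y (1 + y^2)\phi(t, y)$. The Lagrangian formula \eqref{phisoln}, combined with $|y/\gamma| \le 1$ and the elementary bound $(1+\gamma^2)/(1+y^2) \le e^{2Kt}$, gives $\Phi(t) \le \Phi_0\, e^{2Kt}\, \|\gamma_y(t)\|_\infty$ where $\Phi_0 = \sup_y (1 + y^2)\phi_0$. Conversely, the convolution formula \eqref{phitog} translates the decay $\phi \le \Phi(t)/(1+y^2)$ into the pointwise bounds $g(y), |g_y(y)| \lesssim \Phi(t)/(1+y^2)$; combined with $g \le K$, balancing the two alternatives at $|y| \sim \sqrt{\Phi/K}$ yields $\sup_y |y g_y(y)| \lesssim \sqrt{K\, \Phi(t)}$. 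Substituting into \eqref{gammaderivativesoln} gives $\log \|\gamma_y(t)\|_\infty \lesssim Kt + \int_0^t \sqrt{K\, \Phi(\tau)}\,d\tau$.

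The main obstacle is closing this bootstrap. The coupled inequalities form a Bernoulli-type estimate of the form $G' \lesssim K G + C e^{Kt} G^{3/2}$ for $G(t) = \|\gamma_y(t)\|_\infty$, which by itself only yields local-in-time boundedness. To obtain the global bound the plan is to refine the estimate on $g$ by using both the uniform $L^1$ control from Proposition~\ref{prop1} on the bulk of the convolution in \eqref{phitog} and the $y^{-2}$ decay only on the genuine far-field, and to exploit the strict decrease \eqref{42} of $\int g\,dy$ to capture the additional cancellation. This sharpening should make the Gronwall loop integrable on every finite interval, so that $\Phi(t)$ stays bounded on $[0, T^*)$; then $\gamma_y$ is uniformly controlled from above and below, the decay \eqref{decaycondition} is preserved, and a restart of Theorem~\ref{localexistence} past $T^*$ yields the desired contradiction.
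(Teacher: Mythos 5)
Your second paragraph reproduces exactly the a priori bounds that constitute the paper's proof: Lemma~\ref{signconservation} gives $\phi,g\ge 0$ on the interval of existence, Proposition~\ref{prop1} together with the decay condition gives $\|\phi(t)\|_{L^1}\le\|\phi_0\|_{L^1}$, formula \eqref{phitog} then yields $\|g(t)\|_{\infty}\le\tfrac12\|\phi_0\|_{L^1}$, and Lemma~\ref{lemma2} transfers this bound to $g_y$. The paper stops at that point: it reads \eqref{gammaderivativesoln} as saying that uniform bounds on $g$ and $g_y$ keep $\gamma_y$ away from $0$ and $\infty$, which is the continuation criterion for the Banach-space ODE of Theorem~\ref{localexistence}, and concludes.

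The genuine gap is in your last two paragraphs, which is where your version of the theorem would actually have to be proved. You observe (not unreasonably) that the exponent in \eqref{gammaderivativesoln} contains $\gamma\,g_y(\gamma)$ rather than $g_y(\gamma)$, and you attempt to control $\sup_y|y\,g_y(t,y)|$ by bootstrapping on $\Phi(t)=\sup_y(1+y^2)\phi(t,y)$. But, as you yourself compute, the coupled inequalities $\Phi(t)\lesssim\Phi_0e^{2Kt}\|\gamma_y(t)\|_\infty$ and $\log\|\gamma_y(t)\|_\infty\lesssim Kt+\int_0^t\sqrt{K\Phi(\tau)}\,d\tau$ close only into a Bernoulli-type differential inequality whose solutions can escape to infinity in finite time; this yields nothing beyond local-in-time boundedness, i.e.\ nothing beyond Theorem~\ref{localexistence} itself. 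The concluding step is an unexecuted plan (``the plan is to refine the estimate\dots this sharpening should make the Gronwall loop integrable''): no refined estimate is exhibited, and it is not at all clear that the strict decrease \eqref{42} of $\int g\,dy$ supplies the cancellation you are hoping for, since that quantity is already used to produce the uniform bound $K$ and carries no information about the weighted sup norm $\Phi$. As written, the proof is incomplete. To finish you must either justify the paper's reading of the continuation criterion --- that uniform bounds on $g$ and $g_y$ alone suffice to continue the solution --- or actually produce and close the sharpened Gronwall estimate you allude to.
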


\begin{proof}
By the general theory of ODEs in Banach spaces (see e.g. \cite{hartman}), the only way a solution of $\gamma'(t) = F(\gamma(t))$ can blow up in finite time is if it leaves all of the bounded sets on which $F$ is Lipschitz. Recall that we established the Lipschitz property under the condition that $a \le \gamma_y(t,y) \le b$ for some constants $a$ and $b$ satisfying $0<a<1<b$, so we will have global existence as long as $\gamma_y(t,y)$ does not approach either zero or infinity in finite time.

From \eqref{gammaderivativesoln} we see that a uniform bound on both $g$ and $g_y$ is sufficient to control $\gamma_y$. By Lemma \ref{signconservation} we know that $g(t,y)\ge 0$ and $\phi(t,y)\ge 0$ for all $t$ and $y$. 
%
%
Using this and the decay condition \eqref{decaycondition} to ensure that $\phi_0$ is in $L^1$, \eqref{4} implies that
\begin{equation}
\label{globalphipos}
\begin{split}
\left\|\phi(t)\right\|_{L^1}\leq\left\|\phi_0\right\|_{L^1}<\infty, \qquad\quad t\in[0,T).
\end{split}
\end{equation}
Using \eqref{phitog} we obtain
\begin{equation}
\label{globalgpos0}
\lvert g(t,y)\rvert \le \tfrac{1}{2} \int_{-\infty}^{\infty} \lvert \phi(t,y)\rvert \, dy = \tfrac{1}{2} \lVert \phi(t)\rVert_{L^1} \le \tfrac{1}{2} \lVert \phi_0\rVert_{L^1},
\end{equation}
from which we conclude 
by Lemma \ref{lemma2} that $g_y$ is uniformly bounded, and thus no blowup can occur. 
\end{proof}

\section{Blowup}
\label{sec:blowup}

Constantin-Escher \cite{constantin2, constantin1} showed that solutions of the Camassa-Holm equation \eqref{camassaholm} cannot persist globally in time if the initial data $f_0$ is odd and satisfies $f_0'(0)<0$. Further, they derived an upper bound, $T(f_0)=1/2|f_0'(0)|$, for the maximal time of existence of solutions. Theorem \ref{blowup} below establishes the existence of solutions to \eqref{main} which blow up in finite time from initial data $g_0$ that is both symmetric about $y=0$ and satisfies $g_0(0)<0$. Although singularities may form in \eqref{main} from nonpositive initial data\footnote[1]{A blowup feature that solutions to equations \eqref{main} and \eqref{camassaholm} do not share.}, we note that solutions of \eqref{main} seem to retain a few properties that are inherent to the blowup mechanism of \eqref{camassaholm}. For instance, an upper bound for the maximal time of existence of a solution to \eqref{main} is, analogous to that of \eqref{camassaholm}, given by $1/\sqrt{6}|g_0(0)|$. Here $g_0(0)<0$ serves as analogue to the Camassa-Holm condition $f_0'(0)<0$ and blowup, in both cases, is to negative infinity. Moreover, in \cite{constantin1} it was shown that if the Camassa-Holm initial profile $f_0$ is even instead of odd, with $f_0'(0)$ negative enough, then $f_x(t,0)$ can still diverge to negative infinity. A main difference between the qualitative behavior of blowup solutions to \eqref{main} and \eqref{camassaholm} is established in the second part of Theorem \ref{blowup}. More particularly, since \eqref{main} preserves the symmetry of the initial condition, for $\phi_0$ both symmetric and nonpositive we show that solutions of \eqref{main} will actually diverge everywhere on $\mathbb{R}$.
\begin{theorem}
\label{blowup}
Suppose $g$ is a solution of \eqref{main} with initial condition $\phi_0=g_0-g_0''$ satisfying the decay condition \eqref{decaycondition}. Furthermore, assume $g_0$ is even through $y=0$ and $g_0(0)<0$. Then $g(t,0)\to-\infty$ as $t\nearrow T\leq1/\left(\sqrt{6}|g_0(0)|\right)$. Additionally, if $g_0$ is such that $\phi_0\leq0$, then as $t\nearrow T$ we have $g(t,y)\to-\infty$ for all $y\in\mathbb{R}$. 

\end{theorem}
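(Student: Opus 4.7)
The proof treats the two conclusions separately, with the second following easily from the first once the additional sign hypothesis is added.

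\textbf{Part 1: $g(t,0)\to -\infty$.} Since $g_0$ is even and \eqref{main} preserves this symmetry (apply uniqueness to $(t,y)\mapsto g(t,-y)$), $g(t,\cdot)$ and $\phi(t,\cdot)$ remain even for all $t\in[0,T)$. This gives $g_y(t,0)=0$, $\gamma(t,0)=0$, and $J(t):=\gamma_y(t,0)=\exp\!\bigl(\int_0^t h(\tau)\,d\tau\bigr)$ by \eqref{gammaderivativesoln}, where $h(t):=g(t,0)$. By \eqref{phitog} and symmetry,
\[
h(t)=\int_0^\infty e^{-y}\phi(t,y)\,dy.
\]
My plan is to differentiate this in $t$, substitute \eqref{phieq} for $\phi_t$, and integrate by parts (using $g_y(t,0)=0$ to simplify boundary contributions) to arrive at the identity
\[
h'(t)=-h(t)^2-\tfrac12\int_0^\infty(7-y)e^{-y}g^2\,dy-2\int_0^\infty(1+y)e^{-y}g_y^2\,dy.
\]
The third term is manifestly nonpositive. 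For the middle term, the decay condition \eqref{decaycondition} transported through \eqref{phitog} forces $g$ to decay at infinity fast enough that the $y>7$ tail contributes negligibly, while near $y=0$ a Taylor-type lower bound $g(t,y)^2\geq h(t)^2-O(y^2)$ gives a positive weighted contribution comparable to $h(t)^2$. Quantifying this will yield the Riccati inequality $h'(t)\leq-\sqrt{6}\,h(t)^2$, and since $h(0)=g_0(0)<0$ an elementary ODE comparison then gives $h(t)\to-\infty$ at some time $T\leq 1/\!\left(\sqrt{6}\,|g_0(0)|\right)$.

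\textbf{Part 2: $g(t,y)\to -\infty$ for every $y\in\mathbb{R}$ when $\phi_0\leq 0$.} Lemma~\ref{signconservation} yields $\phi(t,y)\leq 0$ and $g(t,y)\leq 0$ throughout $[0,T)\times\mathbb{R}$, so Lemma~\ref{lemma2} applies to give $|g_y(t,y)|\leq|g(t,y)|$. Reading this as $|(\log|g|)_y|\leq 1$ (valid since $|g|$ cannot vanish without forcing $g\equiv 0$, contradicting $h(t)<0$), I integrate from the origin outward to obtain
\[
|g(t,y)|\geq|g(t,0)|\,e^{-|y|}=|h(t)|\,e^{-|y|}.
\]
By Part 1, $|h(t)|\to\infty$ as $t\nearrow T$, so $|g(t,y)|\to\infty$ for every fixed $y$; combined with $g(t,y)\leq 0$, this is $g(t,y)\to-\infty$ pointwise, as required.

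\textbf{Main obstacle.} The heart of the argument is the quantitative Riccati inequality in Part 1, specifically the lower bound on $\int_0^\infty(7-y)e^{-y}g(t,y)^2\,dy$, whose integrand has indefinite sign because the weight $(7-y)$ changes sign at $y=7$. The strategy will be to split at $y=7$: control the tail $y>7$ by a convergent estimate coming from \eqref{decaycondition} and the representation \eqref{phitog}, and bound the contribution from $[0,7]$ using the Taylor expansion of $g$ about $y=0$. Tuning this bound finely enough to extract the constant $\sqrt{6}$ will be the most delicate step; the rest of the argument is essentially routine algebra plus the two lemmas already in §\ref{sec:local}.
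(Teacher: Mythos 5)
Your Part 2 is essentially the paper's argument verbatim, and your intermediate identity for $h'(t)=g_t(t,0)$ is exactly the paper's equation \eqref{eq2} rewritten with the weights $(7-y)$ and $4(1+y)$. The genuine gap is the step you defer to the end: passing from that identity, whose $g^2$-term has the sign-indefinite weight $(7-y)e^{-y}$, to the Riccati inequality $g_t(t,0)\le-\sqrt{6}\,g(t,0)^2$. Your proposed route (split at $y=7$, discard the tail via the decay condition, and use a Taylor lower bound $g(t,y)^2\ge h(t)^2-O(y^2)$ on $[0,7]$) does not work. First, the decay condition \eqref{decaycondition} is a hypothesis on $\phi_0$ only; it transports via \eqref{phisoln} with constants involving $\gamma_y$ and $\gamma^{-5}$ that are not uniformly controlled as $t\nearrow T$, so there is no uniform-in-time sense in which $\int_7^\infty(y-7)e^{-y}g^2\,dy$ is negligible relative to $h(t)^2$ --- and it must be compared to $h(t)^2$, which is itself blowing up. Second, the $O(y^2)$ in your Taylor bound is governed by $g_y$, which is not controlled in Part 1: the hypothesis there is only that $g_0$ is even with $g_0(0)<0$, so $\phi_0$ need not have a sign and Lemma \ref{lemma2} is unavailable. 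Third, the constant $\sqrt{6}$ is sharp algebra, not something a splitting argument with losses will recover.

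The paper's device at precisely this point is the substitution $g(t,y)=e^{y/2}h(t,y)$. The sign-indefinite integrand then becomes $2h^2+h_s^2+sh_s^2+hh_s+shh_s$ (up to the factor $-2$), and after integrating the cross terms by parts --- using $\int_0^\infty shh_s\,ds=-\tfrac12\int_0^\infty h^2\,ds$ and $\int_0^\infty hh_s\,ds=-\tfrac12 h(t,0)^2=-\tfrac12 g(t,0)^2$ --- the $-g(t,0)^2$ term cancels and one is left with
\begin{equation*}
g_t(t,0)=-2\int_0^\infty h_s^2\,ds-3\int_0^\infty h^2\,ds-2\int_0^\infty sh_s^2\,ds,
\end{equation*}
which is manifestly nonpositive. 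The constant then comes from the pointwise inequality $2h_s^2+3h^2\ge 2\sqrt{6}\,\lvert hh_s\rvert$, giving $g_t(t,0)\le 2\sqrt{6}\int_0^\infty hh_s\,ds=-\sqrt{6}\,g(t,0)^2$. Without this (or an equivalent) exact manipulation, your Part 1 is a plan rather than a proof, and the plan as stated would fail.
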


\begin{proof}
Suppose $g_0$ is even through $y=0$ and $g_0(0)<0$; let $T>0$ denote the maximal life-span of $g$. Observe that \eqref{main} may be written as
\begin{equation}
\label{eq1}
g_t(t,y)=-g(t,y)^2-yg(t,y)g_y(t,y)-p*\left[4g_y^2+3g^2+y\partial_y\left(2g_y^2-\frac{1}{2}g^2\right)\right]
\end{equation}
for $p(y)=\frac{1}{2}e^{-|y|}$. Integrating the last term in the bracket by parts, setting $y=0$, and using symmetry of $g$ about $y=0$ then yields
\begin{equation}
\label{eq2}
\begin{split}
g_t(t,0)=
-g(t,0)^2-\tfrac{1}{2}\int_{0}^{\infty}{\left(4g_s^2+7g^2+4sg_s^2-sg^2\right)e^{-s}ds}
\end{split}
\end{equation}
for all $t\in(0,T)$. Now set 
\begin{equation}
\label{h}
\begin{split}
g(t,y)=e^{y/2}h(t,y)
\end{split}
\end{equation}
and note that vanishing of $ye^{-y}g(t,y)^2$ as $y\to\infty$ implies the same for $yh(t,y)^2$. Using \eqref{h} on \eqref{eq2} we obtain
\begin{equation}
\label{integrand}
\begin{split}
g_t(t,0)=
-g(t,0)^2-2\int_{0}^{\infty}{\left(2h^2+h_s^2+sh_s^2+ hh_s+shh_s\right)ds}.
\end{split}
\end{equation}
Integrating the last two terms in \eqref{integrand} by parts and using the above decay condition of $h$ now implies
\begin{equation}
\label{integrand2}
\begin{split}
g_t(t,0) &= -2\int_0^{\infty} h_s^2 \, ds - 3 \int_0^{\infty} h^2 \,ds - 2\int_0^{\infty} sh_s^2 \, ds \\
&\le 2\sqrt{6} \int_0^{\infty} hh_s \, ds - 2\int_0^{\infty} sh_s^2 \,ds \\
&\le -\sqrt{6} g(t,0)^2,
\end{split}
\end{equation}
from which we conclude that 
$g(t,0)\to-\infty$ as $t\nearrow T\leq 1/\big(\sqrt{6} |g_0(0)|\big)$.

Now suppose $\phi_0(y)\leq0$. Then Lemma \ref{signconservation} and the bound $\lvert g_y(t,y)\rvert \le \lvert g(t,y)\rvert$, which was established in the proof of  Lemma \ref{lemma2}, imply that 
\begin{equation}
\label{last}
\begin{split}
\lvert g(t,0)\rvert e^{-y}\leq \lvert g(t,y)\rvert \leq \lvert g(t,0)\rvert e^{y},\qquad y\geq0,\quad t\in[0,T).
\end{split}
\end{equation}
Letting $t\nearrow T$ in \eqref{last} we see that $g(t,y)\to-\infty$ for all $y\geq0$. Symmetry of $g$ about $y=0$ then yields our result for all $y\in\mathbb{R}$.

\end{proof}

\end{document}